\newtheorem{theorem}{Theorem}[section]
\newtheorem{lemma}[theorem]{Lemma}
\newtheorem{proposition}[theorem]{Proposition}
\newtheorem{corollary}[theorem]{Corollary}
\theoremstyle{definition}
\newtheorem{example}[theorem]{Example}
\newtheorem{question}{Question}
\theoremstyle{remark}
\newtheorem{remark}[theorem]{Remark}
\numberwithin{equation}{section}
\newcommand\C{\mathbb{C}}
\newcommand\Z{\mathbb{Z}}
\newcommand\Ha{\mathbb{H}}
\begin{document}

\def\temptablewidth{0.5\textwidth}

\title[Connected sums of almost complex manifolds]{Connected sums of almost complex manifolds}

\author[Huijun Yang]{Huijun Yang}
\address{School of Mathematics and Statistics, Henan University, Kaifeng 475004, Henan, China}
\email{yhj@amss.ac.cn}

\begin{abstract}
 In this paper, firstly, for any $4n$-dimensional almost complex manifolds $M_{i}, ~1\le i \le \alpha$, we prove that $\left(\sharp_{i=1}^{\alpha} M_{i}\right) \sharp (\alpha{-}1) \C P^{2n}$ must admits an almost complex structure, where $\alpha$ is a positive integer. Secondly, for a $2n$-dimensional almost complex manifold $M$, we get that $M\sharp \overline{\C P^{n}}$ also admits an almost complex structure. At last, as an application, we obtain that $\alpha\C P^{2n}\sharp \beta\overline{\C P^{2n}}$ admits an almost complex structure if and only if $\alpha$ is odd.
\end{abstract}

\subjclass[2010]{53C15, 55S35}
\keywords{Almost complex manifolds, Connected sums, Obstructions}

\maketitle


\section{Introduction} 
\label{s:intro}
Through out this paper, all the manifolds are closed, connected, oriented and smooth. 

Let $M$ be a manifold. Denote by $TM$ the tangent bundle of $M$ and $\epsilon^{k}$ the $k$-dimensional trivial real vector bundle over $M$. We say that $M$ is an almost (resp. a stable almost) complex manifolds, i.e., admits an almost (resp. a stable almost) complex structure, if there exists a endomorphism $J\colon TM\rightarrow TM$ (resp. $J\colon TM\oplus \epsilon^{k}\rightarrow TM\oplus \epsilon^{k}$ for some $k$) such that $J^{2}=-1$. If $M$ admits an almost complex structure, it follows from the definition that $M$ must be orientable and with even dimension.

Suppose that $M$ admits an almost complex structure with $\dim M =4n$. Denote by $\chi(M)$ and $\tau(M)$ the Euler characteristic and signature of $M$ respectively.  Then Hirzebruch \cite[p. 777]{hi87} tell us that we must have
\begin{equation}\label{eq:hir}
\chi(M) \equiv (-1)^{n}\tau(M)  \mod 4.
\end{equation}

\begin{remark}
For $n > 1$, the congruence \eqref{eq:hir} can also be deduced from Tang and Zhang \cite[Corollary 3.8]{tz02}.
\end{remark}
\begin{example}\label{exam:cp2n}
Denote by $\C P^{n}$ the $n$-dimensional complex projective space with the natural orientation induced by the complex structure, and $\overline{\C P^{n}}$ the same manifold with the opposite orientation. We know that $\chi(\C P^{2n}) = \chi(\overline{\C P^{2n}}) = 2n{+}1$, $\tau(\C P^{2n}) = 1$ and $\tau(\overline{\C P^{2n}}) = -1$. Therefore, it follows from the congruence \eqref{eq:hir} that $\overline{\C P^{2n}}$ does not admit an almost complex structure for $n \ge 1$ (see also for instance Tang and Zhang \cite[Corollary 3.9]{tz02}).
\end{example}

Let $\alpha$ be a positive integer, and $M_{i},~1\le  i \le \alpha,$ be $4n$-dimensional almost complex manifolds (may be different). Denote by $\alpha M_{1}$ the connected sum of $\alpha$ copies of $M_{1}$, and 
$\sharp_{i=1}^{\alpha} M_{i}$
the connected sum of $M_{i}, ~1\le i \le \alpha$. 
Then we have 
\begin{align}
\chi(\sharp_{i=1}^{\alpha} M_{i}) &= \Sigma_{i=1}^{\alpha}\chi(M_{i}) - 2(\alpha-1), \label{eq:chi}\\
\tau(\sharp_{i=1}^{\alpha} M_{i}) &= \Sigma_{i=1}^{\alpha}\tau(M_{i}). \label{eq:tau}
\end{align}
Hence it follows immediately from the congruence \eqref{eq:hir} that
\begin{proposition}\label{prop:even}
Let $M_{i},~1\le  i \le \alpha,$ be $4n$-dimensional almost complex manifolds. If $\alpha$ is even, then $\sharp_{i=1}^{\alpha} M_{i}$ does not admit an almost complex structure.
\end{proposition}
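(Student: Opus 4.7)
The plan is to combine the Hirzebruch congruence \eqref{eq:hir} with the additivity formulas \eqref{eq:chi} and \eqref{eq:tau}, and show that the quantity $\chi(\sharp_{i=1}^{\alpha} M_{i}) - (-1)^{n}\tau(\sharp_{i=1}^{\alpha} M_{i})$ cannot vanish modulo $4$ when $\alpha$ is even. The argument is purely arithmetic once the three displayed formulas are in hand, so there is no real obstacle.

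First, I would apply \eqref{eq:hir} separately to each summand. Since every $M_{i}$ is a $4n$-dimensional almost complex manifold, we have $\chi(M_{i}) \equiv (-1)^{n}\tau(M_{i}) \pmod 4$ for each $i$. Summing over $1 \le i \le \alpha$ gives
\[
\sum_{i=1}^{\alpha} \chi(M_{i}) \equiv (-1)^{n}\sum_{i=1}^{\alpha} \tau(M_{i}) \pmod 4.
\]

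Next, I would substitute the connected-sum formulas \eqref{eq:chi} and \eqref{eq:tau} to rewrite this in terms of $\chi(\sharp_{i=1}^{\alpha} M_{i})$ and $\tau(\sharp_{i=1}^{\alpha} M_{i})$. This yields
\[
\chi(\sharp_{i=1}^{\alpha} M_{i}) + 2(\alpha-1) \equiv (-1)^{n}\tau(\sharp_{i=1}^{\alpha} M_{i}) \pmod 4,
\]
equivalently $\chi(\sharp_{i=1}^{\alpha} M_{i}) - (-1)^{n}\tau(\sharp_{i=1}^{\alpha} M_{i}) \equiv -2(\alpha-1) \pmod 4$.

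Finally, I would observe that if $\alpha$ is even then $\alpha - 1$ is odd, so $-2(\alpha-1) \equiv 2 \pmod 4$, and hence $\chi(\sharp_{i=1}^{\alpha} M_{i}) \not\equiv (-1)^{n}\tau(\sharp_{i=1}^{\alpha} M_{i}) \pmod 4$. If $\sharp_{i=1}^{\alpha} M_{i}$ were almost complex, this would contradict \eqref{eq:hir} applied to the connected sum itself. Hence $\sharp_{i=1}^{\alpha} M_{i}$ admits no almost complex structure, which is the required conclusion.
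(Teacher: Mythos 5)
Your proof is correct and is exactly the argument the paper intends when it says the proposition "follows immediately" from the congruence \eqref{eq:hir} together with \eqref{eq:chi} and \eqref{eq:tau}: summing the congruence over the summands and comparing with the connected-sum formulas leaves a discrepancy of $2(\alpha-1)\equiv 2 \pmod 4$ when $\alpha$ is even. Nothing is missing.
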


\begin{remark}\label{rem:cpodd}
In Proposition \ref{prop:even}, the conditions $M_{i},~1\le i \le \alpha$ are almost complex manifolds and $\dim M_{i} =4n$ are necessary because of the following facts.
\begin{itemize}
\item[(1)] Denote by $S^{n}$ the $n$-dimensional standard sphere. It can be deduced easily from M\"{u}ller and Geiges \cite[Theorem 4]{mg00} that $S^{4}\times S^{4}\sharp \overline{\C P^{4}}$ do admits an almost complex structure. However, it follows from Sutherland \cite[Theorem 3.1]{su65} (or Yang \cite[Theorems 1 and 2]{ya12} or Datta and Subramanian \cite[Theorem 1]{ds90}) and Example \ref{exam:cp2n} that both $S^{4}\times S^{4}$ and $\overline{\C P^{4}}$ do not admit an almost complex structure. 
\item[(2)] We know that $\alpha \C P^{2n+1}$ is diffeomorphism to $\C P^{2n+1} \sharp (\alpha{-}1)\overline{\C P^{2n+1}}$ which is a blow up of $\C P^{2n+1}$ in $(\alpha{-}1)$ point. Therefore, $\alpha\C P^{2n+1}$ must be K\"{a}hler, hence admits almost complex structure for any positive integer $\alpha$.
\end{itemize}
\end{remark}

Now, based on the facts of Proposition \ref{prop:even} and Remark \ref{rem:cpodd}, it is natural to ask the following naive question:

\begin{question}\label{que:oddacs}
For any odd positive integer $\alpha$ and any $4n$-dimensional almost complex manifolds $M_{i},~1\le  i \le \alpha$, does $\sharp_{i=1}^{\alpha}M_{i}$ admits an almost complex structure?
\end{question}

For example,  it is known that $S^{6}$ admits an almost complex structure, hence $S^{6} \times S^{6}$ admits an almost complex structure. Therefore, It can be deduced from Yang \cite[Theorems 1 and 2]{ya12} that $\alpha S^{6} \times S^{6}$ admits an almost complex structure if and only if $\alpha$ is odd (the proof is left to the reader).

However, the answer to Question \ref{que:oddacs} is negative in generally. 
\begin{example}
We can deduced from Yang \cite[Theorem 2]{ya12} that $\alpha S^{10} \times S^{10}$ admits an almost complex structure if and only if $\alpha \equiv -1 \bmod 1152$. Therefore let $M = 1151 S^{10} \times S^{10}$, it must admits an almost complex structure. Moreover, $\alpha M$ admits an almost complex structure if and only if $\alpha \equiv 1 \bmod 1152$.
\end{example}
In fact, if we set 
$\mathscr{M}_{4m}$ be the set of $(4m{-}1)$-connected $8m$-dimensional smooth manifolds for which admit almost complex structure.
Then it can be deduced from Yang \cite[Lemma 1, Theorem 2]{ya12} that
\begin{proposition}\label{prop:m4m}
For any $M_{i}\in \mathscr{M}_{4m}, 1\le i \le \alpha$, $\sharp_{i=1}^{\alpha}M_{i}$ admits an almost complex structure if and only if $\alpha =1$.
\end{proposition}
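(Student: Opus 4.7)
The plan is to invoke Yang \cite[Theorem 2]{ya12}, which gives an explicit numerical criterion for a $(4m-1)$-connected $8m$-manifold to admit an almost complex structure, combined with Yang \cite[Lemma 1]{ya12}, which tracks how the quantities in that criterion transform under connected sum.

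The "if" direction ($\alpha = 1$) is immediate from the definition of $\mathscr{M}_{4m}$. For the "only if" direction, I first eliminate $\alpha$ even by applying Proposition \ref{prop:even} in dimension $4n = 8m$, so the substantive case is $\alpha \ge 3$ odd.

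For this case, I would use the fact that $M_i$, being $(4m-1)$-connected of dimension $8m$, has $H^{*}(M_i; \Z)$ concentrated in degrees $0$, $4m$, $8m$; consequently any almost complex structure on $M_i$ has all Chern classes forced to vanish except possibly $c_{2m} \in H^{4m}$ and $c_{4m} \in H^{8m}$. The standard relations $p_m = \pm 2 c_{2m}$ and $p_{2m} = c_{2m}^{2} + 2 c_{4m}$, together with $c_{4m} = \chi$, then collapse the existence problem to a single integer equation relating $\chi(M_i)$, $p_m^{2}[M_i]$ and $p_{2m}[M_i]$; this is the content of \cite[Theorem 2]{ya12}, and by the definition of $\mathscr{M}_{4m}$ each $M_i$ satisfies it.

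Finally, I would evaluate this equation on $\sharp_{i=1}^{\alpha} M_i$. By \cite[Lemma 1]{ya12} the Pontrjagin numbers $p_m^{2}$ and $p_{2m}$ are additive under connected sum, whereas $\chi$ picks up the correction $-2(\alpha - 1)$ from \eqref{eq:chi}. Summing the per-summand identities coming from Yang's criterion and inserting the Euler correction shows that the criterion on $\sharp_{i=1}^{\alpha} M_i$ differs from the required identity by a fixed non-zero integer multiple of $(\alpha - 1)$, so it fails whenever $\alpha \ge 2$. The one arithmetic point requiring real care is verifying that this correction term is genuinely non-zero, i.e.\ not accidentally cancelled by the additive Pontrjagin-number contributions, once the precise normalizations of \cite[Theorem 2]{ya12} are substituted in; once this is settled the conclusion is immediate for all odd $\alpha \ge 3$ simultaneously.
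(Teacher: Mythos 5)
Your proposal is correct and follows essentially the same route as the paper: both use the necessary condition $4p_{2m}(M)-p_{m}^{2}(M)=8\chi(M)$ from Yang's Theorem 2 together with the additivity of $p_{2m}$ and $p_{m}^{2}$ under connected sum (Yang's Lemma 1) and the Euler characteristic formula \eqref{eq:chi}, yielding a discrepancy of $16(\alpha-1)\neq 0$ for $\alpha\ge 2$. The detour through Proposition \ref{prop:even} for even $\alpha$ is harmless but unnecessary, since the discrepancy argument already covers all $\alpha\ge 2$ uniformly.
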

\begin{proof}
For a $(4m{-}1)$-connected $8m$-manifold $M$, a necessary condition for $M$ to admits an almost complex structure is (cf. Yang \cite[Theorem 2]{ya12})
\begin{equation}\label{eq:m4m}
4p_{2m}(M)-p^{2}_{m}(M)=8\chi(M),
\end{equation}
where $p_{i}(M)$ is the $i$-th Pontrjagin class of $M$. 
Note that for these manifolds, we must have (cf. Yang \cite[Lemma 1]{ya12})
\begin{align}
p_{2m}(\sharp_{i=1}^{\alpha}M_{i}) &= \Sigma_{i=1}^{\alpha}p_{2m}(M_{i}), \label{eq:p2m}\\
p_{m}^{2}(\sharp_{i=1}^{\alpha}M_{i}) &= \Sigma_{i=1}^{\alpha}p_{m}^{2}(M_{i}). \label{eq:pm2}
\end{align}
Then the facts of this proposition follows easily from the necessary condition \eqref{eq:m4m} and the equations \eqref{eq:chi}, \eqref{eq:p2m} and \eqref{eq:pm2}.
\end{proof}

\begin{remark}
It follows from M\"{u}ller and Geiges \cite[Proposition 6]{mg00} that
$$\Ha P^{2}\sharp\Ha P^{2}\sharp S^{4}\times S^{4} \in \mathscr{M}_{4},$$
where $\Ha P^{2}$ is the quaternionic projective plane. Hence $\mathscr{M}_{4}\neq \emptyset$.
\end{remark}

Even though the answer to Question \ref{que:oddacs} is negative in generally, it may positive if some $M_{i}$ in Question \ref{que:oddacs} are fixed into some particular almost complex manifolds. 
In this paper, our main results are stated as:

\begin{theorem}\label{thm:main}
For any positive integer $\alpha$ and $4n$-dimensional almost complex manifolds $M_{i},~1\le i\le \alpha$, the connected sum 
\begin{equation*}
\left(\sharp_{i=1}^{\alpha}M_{i}\right)\sharp (\alpha{-}1)\C P^{2n}
\end{equation*}
must admits an almost complex structure.
\end{theorem}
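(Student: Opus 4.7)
My plan is to argue by induction on $\alpha$. The base case $\alpha = 1$ is just the hypothesis that $M_1$ is almost complex. For the inductive step, observe the diffeomorphism
\[
\Bigl(\sharp_{i=1}^{\alpha}M_{i}\Bigr)\,\sharp\,(\alpha-1)\C P^{2n}
\ \cong\
\Bigl[\bigl(\sharp_{i=1}^{\alpha-1}M_{i}\bigr)\,\sharp\,(\alpha-2)\C P^{2n}\Bigr]\,\sharp\, M_{\alpha}\,\sharp\,\C P^{2n};
\]
by the inductive hypothesis the bracketed manifold is almost complex, so the whole theorem reduces to the following key claim: \emph{for any two almost complex manifolds $X,Y$ of real dimension $4n$, the triple connected sum $X\sharp Y\sharp\C P^{2n}$ admits an almost complex structure.}

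\textbf{Stabilize, then destabilize.} To prove this key claim I would first glue the three given almost complex structures into a \emph{stable} almost complex structure $J$ on $X\sharp Y\sharp\C P^{2n}$; the gluing is unobstructed because the connecting spheres $S^{4n-1}$ are $(4n{-}2)$-connected. The top Chern class of a stable almost complex structure is additive across connected sums (its restriction to each punctured summand recovers that summand's top Chern class, and the collapse map onto the wedge has total degree one on fundamental classes); combined with the identities $c_{2n}(TX)=\chi(X)$, $c_{2n}(TY)=\chi(Y)$, $c_{2n}(T\C P^{2n})=\chi(\C P^{2n})=2n+1$, and equation \eqref{eq:chi}, this gives the destabilization defect
\[
c_{2n}(J)\,-\,\chi(X\sharp Y\sharp\C P^{2n})\ =\ 4.
\]
So one has to alter $J$ to lower $c_{2n}$ by $4$, while preserving the Chern classes in lower degrees; after that, the top obstruction $c_{2n}-\chi$ vanishes and $J$ destabilizes to an honest $U(2n)$-structure on the tangent bundle.

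\textbf{Main obstacle.} The crux is therefore to show that a change of stable almost complex structure can realize \emph{any} multiple of $4$ as a shift of $c_{2n}\in H^{4n}(\,\cdot\,;\Z)=\Z$ without affecting the lower Chern classes. Such changes are parametrized by maps into the fiber of the forgetful map $BU\to BSO$, and the set of integers that arise as $c_{2n}$-shifts forms a subgroup of $\Z$ determined by a Bott-periodicity computation. The particular role of $\C P^{2n}$ in the theorem becomes transparent here: via the identity $c_{2n}(T\C P^{2n})=\chi(\C P^{2n})$ together with the loss of $4$ in Euler characteristic from each additional connect sum with $\C P^{2n}$, the total defect over the full $(2\alpha-1)$-fold sum is always exactly $4(\alpha-1)$. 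Verifying that $4\Z$ is in fact contained in the subgroup of realizable $c_{2n}$-shifts is the main technical step, and it is precisely where the specific choice of $\C P^{2n}$ (rather than some other almost complex $4n$-manifold) is essential.
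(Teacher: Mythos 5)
Your reduction to the single claim ``$X\sharp Y\sharp\C P^{2n}$ is almost complex'' is fine, and your numerology is exactly right: the defect between the top Chern number of the naively glued stable structure and the Euler characteristic is $4$ per extra $\C P^{2n}$ summand, which matches the paper's computation (via Kahn's formula $\mathfrak{o}[M,J]=\tfrac{1}{2}(\chi(M)-c_{2n}[\tilde J])\,\mathfrak{o}[S^{4n}]$, the glued structure has obstruction $-2\mathfrak{o}[S^{4n}]$ per extra summand). But the proof stops exactly where the actual content begins. You explicitly defer the step of realizing a shift of $c_{2n}$ by $-4$, and the route you sketch for it is the wrong one: you insist on preserving the lower Chern classes, so the modification of the stable lift is supported on the top cell and is governed by $\pi_{4n}$ of the fiber of $BU\to BSO$. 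The $c_{2n}$-shifts realizable that way are constrained by the $(2n-1)!$-divisibility of the top Chern class of complex bundles over $S^{4n}$ --- this is precisely the obstruction that kills almost complex structures on high-dimensional spheres --- and that subgroup does not contain $4\Z$ for $n\ge 2$. So as stated, the ``main technical step'' you postpone is not merely unproven; it is very likely false.

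The paper's resolution is different and more elementary: it does not try to preserve lower Chern classes at all (nothing in the theorem requires them), and it does not modify the glued structure after the fact. Instead it chooses a nonstandard stable almost complex structure on the $\C P^{2n}$ summand from the outset, namely $\eta=(2n{-}1)\gamma+2\bar{\gamma}$. Since $\gamma_{_{R}}=\bar{\gamma}_{_{R}}$, this has the same stable real reduction as $T\C P^{2n}$, but its total Chern class is $(1+x)^{2n-1}(1-x)^{2}$, so $c_{2n}[\eta]=2n-3$ rather than $2n+1$ --- a drop of exactly $4$, achieved by a change that is nontrivial on the lower cells (e.g.\ $c_{1}$ changes from $(2n{+}1)x$ to $(2n{-}3)x$). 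Kahn's lemmas then give $\mathfrak{o}[\C P^{2n},J_{\eta}]=2\mathfrak{o}[S^{4n}]$, which cancels the $-2\mathfrak{o}[S^{4n}]$ contributed by each additional connected-sum operation, with no induction needed. To repair your argument you would need to replace your Bott-periodicity step with this conjugation trick (or some equivalent explicit construction of a stable complex structure on $\C P^{2n}$ with top Chern number $2n-3$); the constraint ``preserving the Chern classes in lower degrees'' must be dropped.
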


\begin{remark}
For $n \ge 2$ and $2n$-dimensional almost complex manifolds $M_{i},~1 \le i \le \alpha$, Geiges has proved in \cite[Lemma 2]{ge01} that $\left(\sharp_{i=1}^{\alpha}M_{i}\right)\sharp (\alpha{-}1)S^{2} \times S^{2n-2}$ must admits an almost complex structure. However, $S^{2}\times S^{2n-2}$ does not admits an almost complex structure whence $n \ge 4$ (cf. Sutherland \cite[Theorem 3.1]{su65} or Datta and Subramanian \cite[Theorem 1]{ds90}).
\end{remark}

Therefore, it follows immediately from Proposition \ref{prop:even} and Theorem \ref{thm:main} that
\begin{corollary}\label{coro:cpodd}
$\alpha\C P^{2n}$ admits an almost complex structure if and only if $\alpha$ is odd.
\end{corollary}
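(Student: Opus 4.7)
The plan is to read off the corollary from Proposition \ref{prop:even} and Theorem \ref{thm:main}, which together give both directions of the equivalence essentially for free, with $\C P^{2n}$ playing the role of the ``fixed'' almost complex summand.

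For the ``only if'' direction, I would observe that $\C P^{2n}$ is an almost complex manifold of dimension $4n$, so the hypothesis of Proposition \ref{prop:even} is satisfied by taking every $M_{i}=\C P^{2n}$. Hence if $\alpha$ is even, $\alpha\C P^{2n}=\sharp_{i=1}^{\alpha}\C P^{2n}$ cannot carry an almost complex structure. This step is a one-line invocation.

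For the ``if'' direction, suppose $\alpha$ is odd and write $\alpha=2k+1$ with $k\ge 0$. If $k=0$ there is nothing to prove since $\C P^{2n}$ is complex. For $k\ge 1$, I would apply Theorem \ref{thm:main} with $\alpha'=k+1$ and $M_{1}=\cdots=M_{k+1}=\C P^{2n}$: the theorem produces an almost complex structure on
\begin{equation*}
\left(\sharp_{i=1}^{k+1}\C P^{2n}\right)\sharp\,k\,\C P^{2n} \;=\; (2k+1)\,\C P^{2n} \;=\; \alpha\,\C P^{2n}.
\end{equation*}

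There is no real obstacle here; the content of the corollary has already been packaged into Proposition \ref{prop:even} (for the obstruction via the Hirzebruch congruence \eqref{eq:hir}) and Theorem \ref{thm:main} (for the construction). The only thing to double-check is the bookkeeping of how many copies of $\C P^{2n}$ appear after applying Theorem \ref{thm:main}, namely $(k+1)+k=2k+1=\alpha$, which matches the target.
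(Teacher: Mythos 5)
Your proof is correct and matches the paper's approach exactly: the paper also derives this corollary immediately from Proposition \ref{prop:even} (for even $\alpha$) and Theorem \ref{thm:main} applied to $k+1$ copies of $\C P^{2n}$ (for odd $\alpha = 2k+1$). The bookkeeping $(k+1)+k=2k+1$ is the only detail to verify, and you have done so.
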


\begin{remark}
This fact has been got by Goertsches and Konstantis \cite{gk17}.
\end{remark}

The facts of theorem \ref{thm:main} lead us to consider more about the connected sum of almost complex manifolds with complex projective spaces.
\begin{theorem}\label{thm:mcpn}
Let $M$ be a $2n$-dimensional almost complex manifolds. Then $M\sharp \overline{\C P^{n}}$ must admits an almost complex structure.
\end{theorem}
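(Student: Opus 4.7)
My plan is to realize $M \sharp \overline{\C P^n}$ as an almost complex analogue of the blow-up of $M$ at a point. This mirrors the classical complex-geometric fact that if $M$ is a complex manifold, then the blow-up $\widetilde{M}_p$ at a point $p$ is diffeomorphic to $M \sharp \overline{\C P^n}$, and the goal is to execute this construction at the level of almost complex structures rather than integrable ones.

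The first step would be a local standardization of the almost complex structure $J$. Fix a point $p \in M$ and a coordinate chart $\phi : B \to M$ from an open ball $B \subset \C^n$ trivializing $TM$ near $p$, and let $J_{0}$ denote the standard linear complex structure on $\C^n$. Because the fiber $SO(2n)/U(n)$ of the bundle of compatible linear complex structures on an oriented $2n$-dimensional real inner product space is path-connected, I would homotope $J$ through almost complex structures on $M$, keeping $J$ unchanged outside $\phi(B)$, to an almost complex structure $J'$ that coincides with $J_{0}$ on a smaller subball $B_0 \subset B$. When $B$ is chosen small enough, the boundary values $J|_{\partial B}$ are nearly constant as a map into $SO(2n)/U(n)$, which removes any homotopy obstruction to interpolating across the annulus $B \setminus B_0$ between $J|_{\partial B}$ and the constant structure $J_{0}$.

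Once $J = J_{0}$ on $\phi(B_0)$, I would perform the blow-up: remove $\phi(B_0)$ from $M$ and glue in $\mathrm{Bl}_0 B_0$ minus a small open neighborhood of the exceptional divisor $E \cong \C P^{n-1}$. Both halves carry natural almost complex structures --- the restriction of $J$ on the $M$ side, and the integrable complex structure on the blown-up ball --- and they agree on the gluing sphere because both coincide with $J_{0}$ there. The resulting manifold $\widetilde{M}$ therefore inherits a global almost complex structure. A standard smooth identification --- using that $\mathrm{Bl}_0 \C^n$ is the total space of the tautological line bundle over $\C P^{n-1}$, whose complement of a disc neighborhood of $E$ is diffeomorphic as an oriented manifold to $\overline{\C P^n}$ minus a ball --- yields $\widetilde{M} \cong M \sharp \overline{\C P^n}$.

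The hard part will be the first step: deforming $J$ to equal $J_{0}$ on a whole neighborhood of $p$, not merely at $p$ itself. The essential inputs are the connectedness of $SO(2n)/U(n)$ and the freedom to shrink $B$ arbitrarily, which together guarantee that the higher homotopy obstructions to interpolating a section of the bundle of compatible linear complex structures across the annular region $B \setminus B_0$ vanish.
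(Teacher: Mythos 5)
Your proposal is correct, but it takes a genuinely different route from the paper. The paper stays entirely inside Kahn's obstruction calculus: it equips $\overline{\C P^{n}}$ with the stable almost complex structure determined by $\eta=n\gamma+\bar{\gamma}$, computes the top Chern number $c_{n}[\tilde{J}_{\eta}]=n-1$, deduces from Lemma \ref{lem:och} that $\mathfrak{o}[\overline{\C P^{n}},J_{\eta}]=\tfrac{1}{2}\left((n+1)-(n-1)\right)\mathfrak{o}[S^{2n}]=\mathfrak{o}[S^{2n}]$, and then kills the total obstruction for $M\sharp\overline{\C P^{n}}$ by the additivity formula of Lemma \ref{lem:osum}. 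You instead realize $M\sharp\overline{\C P^{n}}$ as the almost complex blow-up of $M$ at a point; both of your steps are sound. The local standardization is in fact the easy part: since the chart $B$ is contractible, the section $J|_{B}$, read in the trivialization as a map $B\to SO(2n)/U(n)$, is null-homotopic, and composing a homotopy to the constant $J_{0}$ with a cut-off function supported in $B$ produces the desired $J'$ -- no shrinking of $B$ or smallness of the boundary oscillation is needed. The identification of the blow-up with $M\sharp\overline{\C P^{n}}$ as oriented manifolds is standard (via the tautological bundle description of $\mathrm{Bl}_{0}\C^{n}$), though one phrase in your gluing description is garbled: you must glue in all of $\mathrm{Bl}_{0}B_{0}$, exceptional divisor included, identifying $\mathrm{Bl}_{0}B_{0}\setminus E$ with $\phi(B_{0})\setminus\{p\}$; excising a neighborhood of $E$ before gluing would just reproduce $M$. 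What your approach buys is an explicit, geometric almost complex structure with no appeal to Kahn's machinery; what the paper's approach buys is uniformity with the proof of Theorem \ref{thm:main}, where the extra summand is $\C P^{2n}$ with its complex orientation and hence admits no blow-up interpretation, and more generally an obstruction-class computation that can detect non-existence as well as existence.
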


\begin{remark}
Even if $M\sharp \overline{\C P^{n}}$ admits an almost complex structure, $M$ may not admits an almost complex structure. For example: \\
(1) we known that $S^{2n}$ does not admit an almost complex structure for $k\ge 4$. However, for odd $n$, $S^{2n}\sharp \overline{\C P^{n}} = \overline{\C P^{n}}$ is diffeomorphic to $\C P^{n}$ which admits an almost complex structure.\\
(2) although $S^{4} \times S^{4}$ does not admit an almost complex structure, Remark \ref{rem:cpodd} tell us that $S^{4} \times S^{4} \sharp \overline{\C P^{4}}$ admits an almost complex structure.
\end{remark}

As an application, combing the congruence \eqref{eq:hir} with Theorem \ref{thm:mcpn} and Corollary \ref{coro:cpodd}, we can get that 
\begin{corollary}
$\alpha\C P^{2n} \sharp \beta \overline{\C P^{2n}}$ admits an almost complex structure if and only if $\alpha$ is odd.
\end{corollary}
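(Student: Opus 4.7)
The plan is to treat the two directions separately, using Theorem \ref{thm:mcpn} and Corollary \ref{coro:cpodd} for sufficiency and the Hirzebruch congruence \eqref{eq:hir} for necessity.

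For the sufficiency direction, suppose $\alpha$ is odd. Then Corollary \ref{coro:cpodd} provides an almost complex structure on $\alpha\C P^{2n}$. Since this is a $4n$-dimensional almost complex manifold, Theorem \ref{thm:mcpn} (applied with the dimension parameter in that theorem equal to $2n$) yields an almost complex structure on $\alpha\C P^{2n}\sharp\overline{\C P^{2n}}$. Iterating this step $\beta$ times, using at each stage that the previously constructed connected sum is a $4n$-dimensional almost complex manifold, produces an almost complex structure on $\alpha\C P^{2n}\sharp\beta\overline{\C P^{2n}}$.

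For the necessity direction, assume that $\alpha\C P^{2n}\sharp\beta\overline{\C P^{2n}}$ admits an almost complex structure. From Example \ref{exam:cp2n} and formulas \eqref{eq:chi}, \eqref{eq:tau}, one computes
\begin{align*}
\chi\bigl(\alpha\C P^{2n}\sharp\beta\overline{\C P^{2n}}\bigr) &= (\alpha+\beta)(2n+1) - 2(\alpha+\beta-1) = (\alpha+\beta)(2n-1)+2,\\
\tau\bigl(\alpha\C P^{2n}\sharp\beta\overline{\C P^{2n}}\bigr) &= \alpha-\beta.
\end{align*}
Plugging these into the Hirzebruch congruence \eqref{eq:hir} gives $(\alpha+\beta)(2n-1)+2\equiv (-1)^{n}(\alpha-\beta)\pmod 4$. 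I would split on the parity of $n$: when $n$ is even this reduces to $2(n-1)\alpha+2n\beta+2\equiv 0\pmod 4$, and when $n$ is odd to $2n\alpha+2(n-1)\beta+2\equiv 0\pmod 4$. In both cases, dividing by $2$ and using that exactly one of $n,n-1$ is odd forces $\alpha\equiv 1\pmod 2$, i.e.\ $\alpha$ is odd.

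The sufficiency side is essentially immediate from the two results already proved, so I do not anticipate any obstacle there beyond bookkeeping the iteration. The only step requiring genuine care is the mod-$4$ arithmetic in the necessity direction; the main thing to notice is that, although the sign $(-1)^{n}$ depends on the parity of $n$, the factor $2n-1$ entering $\chi$ is always odd, and this ensures that the $\beta$-dependent terms drop out mod $2$ after dividing by $2$, leaving a condition purely on $\alpha$.
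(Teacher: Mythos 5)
Your proof is correct and follows exactly the route the paper prescribes: it states that the corollary is obtained by combining the congruence \eqref{eq:hir} with Theorem \ref{thm:mcpn} and Corollary \ref{coro:cpodd}, leaving the details to the reader, and your iteration of Theorem \ref{thm:mcpn} for sufficiency together with the mod-$4$ computation for necessity supplies precisely those details. The arithmetic checks out: in both parity cases the coefficient of $\alpha$ is odd and that of $\beta$ is even, so the congruence reduces to $\alpha\equiv 1\pmod 2$ as you claim.
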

The proof of this corollary is left to the reader.
\begin{remark}
For $n=1$ and $2$, the facts of this corollary have been obtained by Audin \cite{au91} and M\"uller and Geiges \cite{mg00} respectively. 
\end{remark}
\begin{remark}
The necessary and sufficient conditions for the existence of almost complex structure on $\alpha\C P^{n} \sharp \beta \overline{\C P^{n}}$ are investigate by Sato and Suzuki \cite[p. 102, Proposition]{ss74}. Unfortunately, their results are not correct.
\end{remark}

The proof of Theorems \ref{thm:main} and \ref{thm:mcpn} will be given in section 2.

\section{Proof of Theorems \ref{thm:main} and \ref{thm:mcpn}}  \label{s:proof}
In order to prove our main results Theorems \ref{thm:main} and \ref{thm:mcpn}, we need some preliminaries. 

Let $M$ be a $2n$-dimensional oriented manifold with tangent bundle $TM$. 
For a complex vector bundle $\eta$ over $M$, denote by 
$\bar{\eta}$ and $\eta_{_{R}}$ the conjugate and real reduction bundle of $\eta$ respectively. It is known that 
\begin{equation*}
\eta_{_{R}} = \bar{\eta}_{_{R}}.
\end{equation*}
If the complex vector bundle $\eta$ satisfies that $\eta_{_{R}}$ is isomorphic (resp. stably isomorphic) to $TM$, it follows from the definition of almost (resp. stable almost) complex structure that $\eta$ determines an almost (resp. a stable almost) complex structure on $M$, and we denote it as $J_{\eta}$ (resp. $\tilde{J}_{\eta}$).  

For the complex projective space $\C P^{n}$, denote by $\gamma$ the canonical complex line bundle over $\C P^{n}$, $TP$ the real tangent bundle of $\C P^{n}$ and $T\overline{P}$ the real tangent bundle of $\overline{\C P^{n}}$.

Moreover, we should use the results and conventions of Kahn \cite{ka69}.
Let $J$ be an almost complex structure on $M {-} D^{2n}$ for some embedded disc $D^{2n}$. Denote by
\begin{equation*}
\mathfrak{o}(M, J) \in H^{2n}(M; \pi_{2n-1}(SO(2n)/U(n)))
\end{equation*} 
the obstruction to extending $J$ as an almost complex structure over $M$, and set
\begin{equation*}
\mathfrak{o}[M, J] = \langle \mathfrak{o}(M, J), [M]\rangle
\end{equation*}
where $[M]$ is the fundamental class of $M$ and $\langle~,~\rangle$ is the Kronecker product.

Then we have the following statements from Kahn \cite{ka69} (cf. Geiges \cite{ge01}):
\begin{lemma}[Kahn]\label{lem:os}
$\mathfrak{o}[S^{2n}, J]$ is independent of $J$ and will be written as $\mathfrak{o}[S^{2n}]$.
\end{lemma}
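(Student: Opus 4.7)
The plan is to reduce the independence claim to the fact that any two almost complex structures on $S^{2n} - D^{2n}$ are homotopic through almost complex structures, and then to invoke the standard principle from obstruction theory that homotopic partial sections yield the same primary obstruction. To set this up I would note that an almost complex structure on an oriented $2n$-manifold $N$ is a section of the associated fibre bundle $\mathrm{Fr}^{+}(TN) \times_{SO(2n)} SO(2n)/U(n) \to N$, i.e., a $U(n)$-reduction of the oriented frame bundle. Taking $N = S^{2n} - D^{2n}$, which is diffeomorphic to an open disc and hence contractible, this bundle is trivial; after a choice of trivialization, almost complex structures on $N$ are in bijection with maps $N \to SO(2n)/U(n)$. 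Since $SO(2n)$ and $U(n)$ are both connected, $SO(2n)/U(n)$ is path-connected, and combined with contractibility of $N$ this forces the space of such maps to be path-connected. Consequently, any two almost complex structures $J, J'$ on $N$ can be joined by a smooth homotopy $\{J_t\}_{t \in [0,1]}$ through almost complex structures on $N$.

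Next I would invoke the homotopy invariance of primary obstructions: viewing $\{J_t\}$ as a partial section on $N \times [0,1]$ of the bundle pulled back to $S^{2n} \times [0,1]$, the obstruction to extending this partial section restricts at $t = 0$ and $t = 1$ to $\mathfrak{o}(S^{2n}, J)$ and $\mathfrak{o}(S^{2n}, J')$ respectively, and naturality forces these restrictions to agree in $H^{2n}(S^{2n}; \pi_{2n-1}(SO(2n)/U(n)))$. Pairing with $[S^{2n}]$ then yields $\mathfrak{o}[S^{2n}, J] = \mathfrak{o}[S^{2n}, J']$, which is the desired independence.

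The main point requiring care is this last step, namely making rigorous the homotopy invariance of the primary obstruction in Kahn's specific framework, where $\mathfrak{o}(S^{2n}, J)$ is concretely described by restricting $J$ to the boundary sphere $\partial D^{2n} \simeq S^{2n-1}$ and reading off its homotopy class after trivializing $TS^{2n}$ across the missing disc. One must check that the same trivialization serves for both $J$ and $J'$, so that the resulting clutching maps $S^{2n-1} \to SO(2n)/U(n)$ are genuinely the boundary restrictions of homotopic maps defined on all of $N$. Once this book-keeping is in place, independence of $J$ follows and we may legitimately write $\mathfrak{o}[S^{2n}]$ for the common value.
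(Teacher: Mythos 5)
The paper does not prove this lemma at all; it is quoted from Kahn \cite{ka69} (cf.\ Geiges \cite{ge01}), so there is no internal proof to compare against. Your argument is correct and is in fact the standard one underlying Kahn's statement: $S^{2n}-D^{2n}$ is contractible and $SO(2n)/U(n)$ is path-connected (indeed simply connected, which also disposes of any local-coefficient issues), so any two almost complex structures on $S^{2n}-D^{2n}$ are homotopic through sections of the twistor-type bundle, and homotopic sections over the complement of the top cell have cohomologous obstruction cocycles by the usual difference-cochain argument. Your final paragraph correctly identifies the only point needing care, namely that the clutching maps $S^{2n-1}\to SO(2n)/U(n)$ for $J$ and $J'$ are read off in one and the same trivialization of $TS^{2n}$ over the disc, so that homotopy of the sections over $S^{2n}-D^{2n}$ really does yield equality of the two classes in $\pi_{2n-1}(SO(2n)/U(n))$ after pairing with $[S^{2n}]$.
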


\begin{lemma}[Kahn]\label{lem:osum}
Almost complex structure $J$ on $M{-}D^{2n}$ and $J^{\prime}$ on $M^{\prime}{-}D^{2n}$ give rise to a natural almost complex structure $J+J^{\prime}$ on $M\sharp M^{\prime}{-}D^{2n}$ (which coincides with $J$ resp. $J^{\prime}$ along the $(2n{-}1)$-skeleton of $M\sharp M^{\prime}$) such that 
\begin{equation*}
\mathfrak{o}[M\sharp M^{\prime}, J+J^{\prime}] = \mathfrak{o}[M, J] + \mathfrak{o}[M^{\prime}, J^{\prime}] - \mathfrak{o}[S^{2n}].
\end{equation*}
\end{lemma}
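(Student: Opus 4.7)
The plan is to construct the natural almost complex structure $J+J'$ by a geometric decomposition of $M\sharp M'-D^{2n}$ and then compute its obstruction in terms of those of $J$ and $J'$. I would begin by choosing a small disc $D_0$ in the interior of $M-D^{2n}$ (away from the connected-sum locus), and writing
\[
M\sharp M'-D_0 \;=\; \bigl((M-D^{2n})-D_0\bigr)\cup_{\Sigma}(M'-D^{2n}),
\]
where $\Sigma\cong S^{2n-1}$ is the sphere of the connected sum. The structure $J$ restricts to the first piece and $J'$ to the second, and they must be interpolated along a collar of $\Sigma$. Because $T(M\sharp M')$ is trivial over a tubular neighborhood of $\Sigma$, the two restrictions $J|_\Sigma$ and $J'|_\Sigma$ can be joined by a homotopy in that collar, yielding a structure $J+J'$ on $M\sharp M'-D_0$ that coincides with $J$ on the $(2n-1)$-skeleton of $M$ and with $J'$ on that of $M'$, as required.

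Next I would compute the obstruction by identifying $\mathfrak{o}[M,J]$, $\mathfrak{o}[M',J']$, and $\mathfrak{o}[M\sharp M',J+J']$ as the homotopy classes of the boundary restrictions $J|_{\partial D^{2n}}$, $J'|_{\partial D^{2n}}$, and $(J+J')|_{\partial D_0}$, each viewed as an element of $\pi_{2n-1}(SO(2n)/U(n))$ relative to the canonical trivialization of the tangent bundle over the respective removed disc. Because $D_0$ lies on the $M$-side of the connected sum, the restriction $(J+J')|_{\partial D_0}$ is locally equal to $J|_{\partial D_0}$; however, comparing it with $\mathfrak{o}[M,J]$ — which was computed relative to the trivialization of $TM$ alone — requires transporting through the interpolation collar across $\Sigma$, and this transport is where the correction enters.

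Finally, I would identify the correction with $-\mathfrak{o}[S^{2n}]$. The picture is that the collar interpolation $\Sigma\times I\to SO(2n)/U(n)$ is precisely the data one needs to extend a given almost complex structure from $S^{2n-1}=\partial D^{2n}$ across a full $2n$-disc; the obstruction to performing this extension relative to the standard trivialization on the other side is $\mathfrak{o}[S^{2n}]$, essentially by Lemma \ref{lem:os}. The main obstacle lies here: one must verify, either by a careful clutching-function computation or by a direct cell-by-cell analysis along a CW model for $M\sharp M'$ in which the $(2n-1)$-skeleton is a wedge of the two punctured skeleta, that the magnitude and sign of the correction term are exactly $-\mathfrak{o}[S^{2n}]$. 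A useful cross-check is the case $M'=S^{2n}$, in which $M\sharp S^{2n}\cong M$ and $J+J'\simeq J$: the formula then reduces to $\mathfrak{o}[M,J]=\mathfrak{o}[M,J]+\mathfrak{o}[S^{2n}]-\mathfrak{o}[S^{2n}]$, both confirming the presence of the correction and pinning down its sign.
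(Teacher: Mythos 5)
The paper offers no proof of this lemma---it is quoted from Kahn \cite{ka69} (cf.\ Geiges \cite{ge01})---so your proposal must stand on its own, and as written it has a genuine gap at its very first step. You construct $J+J^{\prime}$ by restricting $J$ and $J^{\prime}$ to the two halves of $M\sharp M^{\prime}{-}D_{0}$ and assert that, because $T(M\sharp M^{\prime})$ is trivial near the neck $\Sigma\cong S^{2n-1}$, the restrictions $J|_{\Sigma}$ and $J^{\prime}|_{\Sigma}$ ``can be joined by a homotopy in that collar.'' Triviality of the bundle only says that each restriction is a map $S^{2n-1}\to SO(2n)/U(n)$; two such maps can be joined by a homotopy through the collar if and only if they represent the same class in $\pi_{2n-1}(SO(2n)/U(n))$, and the class of $J|_{\partial D^{2n}}$ relative to the disc trivialization is precisely the top obstruction to extending $J$ over $D^{2n}$, which is in general nonzero and in general different from the corresponding class for $J^{\prime}$ (take $M^{\prime}$ honestly almost complex and $M$ with nonvanishing obstruction). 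So the interpolating homotopy you need does not exist in general, and your construction does not produce an almost complex structure on all of $M\sharp M^{\prime}{-}D_{0}$. The standard remedy, and what Kahn actually does, is to note that $M\sharp M^{\prime}{-}\mathrm{int}\,D_{0}$ deformation retracts onto a space homotopy equivalent to the one-point union $(M{-}\mathrm{int}\,D^{2n})\vee(M^{\prime}{-}\mathrm{int}\,D^{2n})$: the two structures then only need to be matched at a single point, which is always possible since $SO(2n)/U(n)$ is connected, and the section of the bundle of complex structures extends over $M\sharp M^{\prime}{-}D_{0}$ along the retraction. The group you are implicitly treating as irrelevant is exactly where the content of the lemma lives.

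The second half of your argument also stops short of a proof. You reduce the identification of the correction term to ``a careful clutching-function computation or a direct cell-by-cell analysis'' and then offer the case $M^{\prime}=S^{2n}$ as a cross-check; but that check only confirms consistency---any correction that happens to vanish for $M^{\prime}=S^{2n}$ would pass it equally well---so the magnitude of the term is never actually established. The genuine computation evaluates the obstruction cocycle of $J+J^{\prime}$ on the single top cell of $M\sharp M^{\prime}$, whose attaching map is the sum, under the pinch map $S^{2n-1}\to S^{2n-1}\vee S^{2n-1}$, of the attaching maps of the top cells of $M$ and $M^{\prime}$; additivity of Kahn's difference cochain over this sum, combined with the fact that the connected sum has one top cell where the disjoint union has two, is what yields exactly one copy of $-\mathfrak{o}[S^{2n}]$. (In the situations this paper actually uses, one can also sidestep the general statement: when $J$ and $J^{\prime}$ admit stable extensions, Lemma \ref{lem:och} together with $\chi(M\sharp M^{\prime})=\chi(M)+\chi(M^{\prime})-2$, the additivity of $c_{n}$, and $\chi(S^{2n})=2$ gives the formula directly.) As it stands, your proposal establishes neither the existence of $J+J^{\prime}$ nor the value of its obstruction.
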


\begin{lemma}[Kahn]\label{lem:och}
Let $J$ be an almost complex structure on $M{-}D^{2n}$ that extends over $M$ as a stable almost complex structure $\tilde{J}$. Then
\begin{equation*}
\mathfrak{o}[M, J] = \frac{1}{2}\left(\chi(M)-c_{n}[\tilde{J}]\right)\mathfrak{o}[S^{2n}],
\end{equation*}
where $c_{n}[\tilde{J}]$ is the top Chern number of $\tilde{J}$.
\end{lemma}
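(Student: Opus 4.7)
My plan is to establish Lemma \ref{lem:och} by an obstruction-theoretic comparison between the unstable problem of extending $J$ as a genuine almost complex structure on $M$ and the stable problem, which by hypothesis is already solved by $\tilde J$. The stable obstruction vanishes, so the entire content of the formula is the evaluation of what survives after stabilization.

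First I would record the homotopy-theoretic input. For $n\ge 1$ the group $\pi_{2n-1}(SO(2n)/U(n))$ is infinite cyclic with the preferred generator $\mathfrak o[S^{2n}]$ normalized by Lemma \ref{lem:os}, and the natural stabilization map
$$\pi_{2n-1}\bigl(SO(2n)/U(n)\bigr)\longrightarrow \pi_{2n-1}(SO/U)$$
has image of index $2$. This factor is precisely the source of the $\tfrac12$ in the claimed formula: since $\tilde J$ stably extends $J$, the image of $\mathfrak o(M,J)$ in $H^{2n}(M;\pi_{2n-1}(SO/U))$ must vanish, so $\mathfrak o[M,J]$ is forced to be an even multiple of $\mathfrak o[S^{2n}]$. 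Writing $\mathfrak o[M,J]=\tfrac12 m\,\mathfrak o[S^{2n}]$, the task reduces to proving $m=\chi(M)-c_n[\tilde J]$.

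To identify $m$ I would interpret both sides through characteristic classes. Over $M-D^{2n}$ the pair $(TM,J)$ is a genuine rank-$n$ complex bundle for which the top Chern class coincides with the Euler class of the underlying real bundle; meanwhile $\tilde J$ realizes $TM\oplus\epsilon^{k}$ as a complex bundle on all of $M$, so $c_n[\tilde J]$ is a well defined integer. Choosing a triangulation of $M$ in which the closure of $D^{2n}$ is a single top-dimensional cell, the obstruction $\mathfrak o(M,J)$ is represented by a cellular cochain whose value on that cell is the clutching class of $J|_{\partial D^{2n}}$ in $\pi_{2n-1}(SO(2n)/U(n))$. The Euler class of $TM$ paired with the fundamental class contributes $\chi(M)$; $c_n[\tilde J]$ contributes $c_n[\tilde J]$; and the Euler/Chern identification over $M-D^{2n}$ together with the index-$2$ statement converts the discrepancy into $\tfrac12(\chi(M)-c_n[\tilde J])\mathfrak o[S^{2n}]$, as required.

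The main obstacle, and the point at which I would lean directly on Kahn \cite{ka69}, is twofold: establishing the index-$2$ property of the stabilization map on $\pi_{2n-1}(SO(2n)/U(n))$, and carrying out the cellular bookkeeping that converts the abstract comparison of the Euler class with the stably defined $c_n[\tilde J]$ into the precise numerical identity $m=\chi(M)-c_n[\tilde J]$. The former is a classical homotopy computation going back to Massey; the latter is an obstruction-theoretic exercise on a CW-pair. Both are carried out in \cite{ka69}, so in a formal write-up I would simply cite that source; the preceding paragraphs describe the conceptual picture behind the citation.
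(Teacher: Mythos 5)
You should first be aware that the paper does not prove Lemma \ref{lem:och} at all: it is stated as a quoted result of Kahn \cite{ka69} (cf.\ Geiges \cite{ge01}), so your final move of deferring to that source is consistent with what the paper does. The difficulty is that the ``conceptual picture'' you wrap around the citation is not merely vague but wrong, in ways the paper's own computations contradict. The group $\pi_{2n-1}(SO(2n)/U(n))$ is not infinite cyclic with generator $\mathfrak{o}[S^{2n}]$: for $2n=6$ it vanishes (which is why $S^{6}$, and every stably almost complex $6$-manifold, admits an almost complex structure), for $2n=8$ it is $\Z\oplus\Z/2$, and in general $\mathfrak{o}[S^{2n}]$ is a torsion element whose finite order is exactly what makes the lemma useful. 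Likewise, the claim that stabilization forces $\mathfrak{o}[M,J]$ to be an even multiple of $\mathfrak{o}[S^{2n}]$ is false: the stable group $\pi_{2n-1}(SO/U)\cong\pi_{2n}(SO)$ is $\Z/2$ or $0$, so ``image of index $2$'' cannot mean what you need it to mean, and in the proof of Theorem \ref{thm:mcpn} the paper itself computes $\mathfrak{o}[\overline{\C P^{n}},J_{\eta}]=1\cdot\mathfrak{o}[S^{2n}]$, an \emph{odd} multiple. Hence the factor $\tfrac{1}{2}$ cannot arise from the parity argument you propose; in Kahn's computation it comes out of the comparison of the Euler class of $TM$ with the top Chern class through the boundary homomorphism of the fibration $U(n)\to SO(2n)\to SO(2n)/U(n)$, not from a divisibility property of the obstruction.

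The characteristic-class paragraph also carries no weight as written: $H^{2n}(M-D^{2n})=0$, so the identity $c_{n}(TM,J)=e(TM)$ over $M-D^{2n}$ is vacuous, and the entire content of the lemma is the relative bookkeeping over the top cell --- precisely the step you leave to \cite{ka69}. Stripped of the incorrect assertions, your proposal reduces to the citation that the paper itself makes; read as an actual proof sketch, it fails at the two places indicated above.
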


Now we are in position to prove the Theorems \ref{thm:main} and \ref{thm:mcpn}.

\begin{proof}[Proof of Theorem \ref{thm:main}]
Firstly, let us consider the stable almost complex structure on $\C P^{2n}$. Let $\eta = (2n{-}1)\gamma + 2\bar{\gamma}$. It is known that 
\begin{equation*}
\eta_{_{R}} = (2n+1) \gamma_{_{R}} = (2n+1)\bar{\gamma}_{_{R}}
\end{equation*}
is stably isomorphic to $TP$. Hence $\eta$ determines a stable almost complex structure $\tilde{J}_{\eta}$ on $\C P^{2n}$ and the total Chern class of $\tilde{J}_{\eta}$ is 
\begin{equation*}
c(\tilde{J}_{\eta}) = c(\eta) = c(\gamma)^{2n-1}c(\bar{\gamma})^{2} = (1+x)^{2n-1}(1-x)^{2},
\end{equation*}
where $x \in H^{2}(\C P^{2n}; \Z)$ is the first Chern class of $\gamma$. Therefore,
\begin{equation*}
c_{2n}[\tilde{J_{\eta}}] = c_{2n}[\eta] = 2n-3.
\end{equation*}
Since $\C P^{2n}{-}D^{4n}$ is homotopic equivalent to $\C P^{2n-1}$ and the coefficient groups $\pi_{r}(SO(4n)/U(2n))$ for the obstructions to an almost complex structure are stable for $r < 4n{-}1$ (cf. Massey \cite{ma61}), the stable almost complex structure $\tilde{J}_{\eta}$ induces an almost complex structure $J_{\eta}$ on $\C P^{2n}{-}D^{4n}$.
Then it follows from Lemma \ref{lem:och} that
\begin{equation*}
\mathfrak{o}[\C P^{2n}, J_{\eta}] = 2 \mathfrak{o}[S^{4n}].
\end{equation*}

Now denote by $J_{i}$ the given almost complex structures on $M_{i}, ~1 \le i \le \alpha$. It is clearly that
\begin{equation*}
\mathfrak{o}[M_{i}, J_{i}] = 0.
\end{equation*}
Consequently, Lemma \ref{lem:osum} implies that 
\begin{align*}
& \mathfrak{o}[\sharp_{i=1}^{\alpha}M_{i}\sharp (\alpha{-}1) \C P^{2n}, ~\Sigma_{i=1}^{\alpha}J_{i}{+}(\alpha{-}1)J_{\eta}] \\
= ~& \Sigma_{i=1}^{\alpha}\mathfrak{o}[M_{i},~J_{i}] + (\alpha{-}1)\mathfrak{o}[\C P^{2n}, J_{\eta}] - (2\alpha {-} 2)\mathfrak{o}[S^{4n}]\\
=~& 0.
\end{align*}
This completes the proof.
\end{proof}

\begin{proof}[Proof of Theorem \ref{thm:mcpn}]
Firstly, let us consider the stable almost complex structures on $\C P^{n}$. In the stable range, it is obviously that the tangent bundle $T\overline{P}$ of $\overline{\C P^{n}}$ is stably isomorphic to the tangent bundle $TP$ of $\C P^{n}$. Thus, let $\eta = n\gamma + \bar{\gamma}$, it is follows that 
\begin{equation*}
\eta_{_{R}} = (n+1)\gamma_{_{R}} = (n+1)\bar{\gamma}_{_{R}}
\end{equation*}
is stably isomorphic to $T\overline{P}$. Hence $\eta$ determines a stable almost complex structure $\tilde{J}_{\eta}$ on $\overline{\C P^{n}}$ and the total Chern class of $\tilde{J}_{\eta}$ is 
\begin{equation*}
c(\tilde{J}_{\eta}) = c(\eta) = c(\gamma)^{n}c(\bar{\gamma})^{} = (1+x)^{n}(1-x).
\end{equation*}
Therefore,
\begin{equation*}
c_{n}[\tilde{J_{\eta}}] = c_{n}[\eta] = \langle c_{n}(\eta), [\overline{\C P^{n}}] \rangle = n-1.
\end{equation*}

Then, as in the proof of the Theorem \ref{thm:main}, the stable almost complex structure $\tilde{J}_{\eta}$ induces an almost complex structure $J_{\eta}$ on $\overline{\C P^{n}}{-}D^{2n}$.
Hence it follows from Lemma \ref{lem:och} that
\begin{equation*}
\mathfrak{o}[\overline{\C P^{n}}, J_{\eta}] =  \mathfrak{o}[S^{2n}].
\end{equation*}

Now denote by $J$ the given almost complex structures on $M$. It is clearly that
\begin{equation*}
\mathfrak{o}[M, J] = 0.
\end{equation*}
Consequently, Lemma \ref{lem:osum} implies that 
\begin{align*}
\mathfrak{o}[M \sharp \overline{\C P^{n},} ~J{+}J_{\eta}]
= \mathfrak{o}[M,~J] + \mathfrak{o}[\overline{\C P^{n}}, J_{\eta}] - \mathfrak{o}[S^{2n}]
= 0.
\end{align*}
This completes the proof.
\end{proof}



\noindent
\thanks{\textbf{Acknowledgment.} The author would like to thank the University of Melbourne where parts of this work to be carried out and also Diarmuid Crowley for his hospitality. The author is partially supported by the China Scholarship Council (File No. 201708410052).}








%
%


%


\end{document}